\newtheorem{proposition}{Proposition}[section]
\newtheorem{theorem}{Theorem}
\newtheorem{lemma}[proposition]{Lemma}
\theoremstyle{definition}
\newtheorem{definition}{Definition}[section]
\theoremstyle{remark}
\newtheorem{remark}{Remark}[section]
\numberwithin{equation}{section}
\newcommand{\abs}[1]{\lvert#1\rvert}
\newcommand{\st}{\: :\:}
\newcommand{\R}{\mathbb{R}}
\newcommand{\N}{\mathbb{N}}
\newcommand{\Z}{\mathbb{Z}}
\DeclareMathOperator{\capa}{cap}
\title[Poly\'a--Szeg\H o and relative capacity inequalities]{Equivalence between Poly\'a--Szeg\H o and relative capacity inequalities under rearrangement}
\author{Jean Van Schaftingen}
\address{Universit\'e catholique de Louvain\\
Institut de Recherche en Math\'ematique et Physique (IRMP)\\
Chemin du Cyclotron 2 bte L7.01.01\\
1348 Louvain-la-Neuve\\
Belgium}
\email{Jean.VanSchaftingen@uclouvain.be}
\subjclass[2010]{26D15 (35A23, 46E35)}
\keywords{P\'oly--Szeg\H o inequality; symmetrization; rearrangement; relative capacity of condensers; weakly differentiable functions on manifolds; isoperimetric inequalities.} 
\begin{document}

\begin{abstract}
The transformations of functions acting on sublevel sets that satisfy a P\'olya--Szeg\H o inequality are characterized as those being induced by transformations of sets that do not increase the associated capacity.
\end{abstract}

\maketitle


\section{Introduction}

Symmetrization by rearrangement is an analytical tool to prove that some functionals achieve their minimum on symmetric sets and functions.
In the original setting \citelist{\cite{PolyaSzego1951}\cite{PolyaSzego1945}}, it associates to every nonnegative measurable function \(u : \R^n \to \Bar{\R}= \R \cup \{+ \infty\}\) a symmetrized function \(u^* : \R^n \to \Bar{\R}\), which is radial and such that for every \(t \in \R\),
\[
 \mathcal{L}^n \bigl(\bigl\{ x \in \R^n \st u^* (x) >t \bigr\}\bigr)
 =\mathcal{L}^n \bigl(\bigl\{ x \in \R^n \st u (x) >t \bigr\}\bigr).
\]

Two key properties of symmetrization in variational problems are the \emph{Cavalieri principle} \citelist{\cite{LiebLoss2001}*{\S 3.3 (3)}}: for every Borel measurable function \(f : [0, \infty) \to [0, \infty)\) such that \(f (0) = 0\) and \(f \circ u \in L^1 (\R^n)\), one has \(f \circ u^* \in L^1 (\R^n)\) and 
\begin{equation}
\label{eqCavalieri}
  \int_{\R^n} f \circ u^* = \int_{\R^n} f \circ u
\end{equation}
and the \emph{P\'olya--Szeg\H o inequality}\citelist{\cite{PolyaSzego1945}\cite{Polya1950}\cite{PolyaSzego1951}\cite{LiebLoss2001}*{lemma 7.17}\cite{Willem2003}*{theorem 8.3.14}}: if \(u\) belongs to the Sobolev space \(W^{1, 2} (\R^n)\) of weakly differentiable square integrable functions whose weak derivative is square integrable, then \(u^* \in W^{1, 2} (\R^n)\) also, and the weak derivatives \(du\) and \(du^*\) satisfy the inequality
\begin{equation}
\label{eqPolyaSzego}
 \int_{\R^n} \abs{du^*}^2 \le \int_{\R^n} \abs{du}.
\end{equation}

Many different types of symmetrizations by rearrangement can be defined depending on the particular problem \citelist{\cite{Kawohl1985}*{II.1}}. 
In this variety, it can be observed that some properties of the symmetrization only depend on the way that it acts on sublevel sets. For instance if \(\cdot^*\) is a transformation of measurable subsets of \(M\) into subsets of \(N\) that induces a transformations of functions in such a way that for every \(t \in \R\), 
\[
 \{y \in N \st u^* (y) > t \} = \{x \in M \st u (x) > t \}^*,                                                                                                                                                                                                                                                                                                                                                                                                                                                                                                                                                                                              
\]
then the Cavalieri principle \eqref{eqCavalieri} is equivalent with the preservation of measure: for every measurable set \(A \subseteq \R^n\), \(\mathcal{L}^n (A^*) = \mathcal{L}^n (A)\). Moreover, this implies that the transformation \(\cdot^*\) is nonexpansive for the Nikodym distance between measurable sets and from \(L^p (M)\) to \(L^p (N)\), for every \(p \in [1, \infty]\) \citelist{\cite{CroweZweibelRosenbloom1986}\cite{Willem2003}*{corollaire 21.10}}.

The characterization of the P\'olya--Szeg\H o inequality by the properties of the symmetrization by sets is more delicate. An analysis of the proof of this inequality by the coarea formula \cite{Talenti1976}*{proof of lemma 1} highlights as a tool an isoperimetric inequality on the sublevel sets: the De Giorgi perimeters \(P (A) \) and \(P (A^*)\) satisfy
\[
  P (A^*) \le P (A).
\]
This isoperimetric condition is sufficient in the counterpart of \eqref{eqPolyaSzego} for the total variation \(\int_{\R^n} \abs{du}\) when \(u\) is in the space of functions of bounded variation \(BV (\R^n)\).
However, it is not sufficient for the inequality \eqref{eqPolyaSzego}. 
Indeed, the Dirichlet integral \(\int_{\R^n} \abs{du}^2\) depends not only on the geometry of the sublevel sets of \(u\), but also on the relative position of its sublevel sets.

The goal of this note is to characterize by their action on sublevel sets rearrangements for which the P\'olya--Szeg\H o inequality holds.

\begin{theorem}
\label{theoremMain}
Let \(\cdot^*\) be a measure-preserving rearrangement.
One has for for every measurable sets \(A \subseteq B \subset \R^n\)
\[
 \capa (A^*, B^*) \le \capa (A, B),
 \]
if, and only if, for every function \(u \in W^{1, 2} (\R^n)\), one has
\(u^* \in W^{1, 2} (\R^n)\) and 
\[
 \int_{\R^n} \abs{d u^*}^2 \le \int_{\R^n} \abs{d u}^2.
\]
\end{theorem}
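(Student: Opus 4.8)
The plan is to reduce both implications of the theorem to a single \emph{capacitary representation} of the Dirichlet integral that only reads off the nested family of superlevel sets. For a nonnegative $v\in L^2(\R^n)$ and a finite increasing sequence of levels $\mathcal P:0=t_0<t_1<\dots<t_N$, set
\[
 E_{\mathcal P}(v):=\sum_{k=1}^{N}(t_k-t_{k-1})^2\,\capa\bigl(\{v>t_k\},\{v>t_{k-1}\}\bigr).
\]
I would first prove the following lemma: $v\in W^{1,2}(\R^n)$ if and only if $\sup_{\mathcal P}E_{\mathcal P}(v)<\infty$, and in that case $\int_{\R^n}\abs{dv}^2=\sup_{\mathcal P}E_{\mathcal P}(v)$. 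The inequality $E_{\mathcal P}(v)\le\int_{\R^n}\abs{dv}^2$ is the easy half: the truncation $w_k:=(t_k-t_{k-1})^{-1}\min\{(v-t_{k-1})^+,\,t_k-t_{k-1}\}$ is admissible for the condenser $(\{v>t_k\},\{v>t_{k-1}\})$, has gradient $(t_k-t_{k-1})^{-1}dv$ on $\{t_{k-1}<v<t_k\}$ and vanishes elsewhere, so each term of $E_{\mathcal P}(v)$ is bounded by $\int_{\{t_{k-1}<v<t_k\}}\abs{dv}^2$, and these sets are pairwise disjoint.

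For the converse inequality one uses the capacitary \emph{potentials} rather than the truncations. Let $w_k$ minimize $\int_{\R^n}\abs{dw}^2$ among admissible competitors for $(\{v>t_k\},\{v>t_{k-1}\})$, so $0\le w_k\le1$, $w_k=1$ a.e.\ on $\{v>t_k\}$, $w_k=0$ a.e.\ on $\{v\le t_{k-1}\}$, and $\int_{\R^n}\abs{dw_k}^2=\capa(\{v>t_k\},\{v>t_{k-1}\})$; put $\tilde v_{\mathcal P}:=\sum_{k=1}^N(t_k-t_{k-1})w_k$. The crucial point is the absence of cross terms: if $k<l$ then $t_k\le t_{l-1}$, hence $w_k\equiv1$ a.e.\ on $\{v>t_{l-1}\}$ and so $dw_k=0$ there, while $dw_l=0$ a.e.\ on $\{v\le t_{l-1}\}$; thus $dw_k\cdot dw_l=0$ a.e.\ and $\int_{\R^n}\abs{d\tilde v_{\mathcal P}}^2=E_{\mathcal P}(v)$. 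On the other hand, on each layer $\{t_{j-1}<v\le t_j\}$ one computes $\tilde v_{\mathcal P}=t_{j-1}+(t_j-t_{j-1})w_j\in[t_{j-1},t_j]$, so $\abs{\tilde v_{\mathcal P}-v}\le\abs{\mathcal P}$ a.e.\ once a top level $t_N\ge\operatorname{ess\,sup}v$ is adjoined (truncating $v$ first if it is unbounded). Letting the mesh of $\mathcal P$ tend to $0$, $\tilde v_{\mathcal P}\to v$ in $L^2_{\mathrm{loc}}(\R^n)$ while $\int_{\R^n}\abs{d\tilde v_{\mathcal P}}^2=E_{\mathcal P}(v)$ stays bounded, so $d\tilde v_{\mathcal P}\weakto dv$ in $L^2(\R^n;\R^n)$ along a subsequence and lower semicontinuity of the Dirichlet integral gives $\int_{\R^n}\abs{dv}^2\le\sup_{\mathcal P}E_{\mathcal P}(v)$; in particular $v\in W^{1,2}(\R^n)$ whenever that supremum is finite.

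Granting the lemma, the theorem follows almost formally. Because $\{u^*>t\}=\{u>t\}^*$ for every $t$, one has $E_{\mathcal P}(u^*)=\sum_k(t_k-t_{k-1})^2\,\capa(\{u>t_k\}^*,\{u>t_{k-1}\}^*)$, so applying the capacity hypothesis to each condenser $(\{u>t_k\},\{u>t_{k-1}\})$, which is legitimate since $\{u>t_k\}\subseteq\{u>t_{k-1}\}$, gives $E_{\mathcal P}(u^*)\le E_{\mathcal P}(u)$ for every $\mathcal P$. Combined with Cavalieri's principle \eqref{eqCavalieri} for $f(s)=s^2$ (hence $u^*\in L^2(\R^n)$) and the lemma applied to $u$ and to $u^*$, this yields $u^*\in W^{1,2}(\R^n)$ and $\int_{\R^n}\abs{du^*}^2\le\int_{\R^n}\abs{du}^2$; one reduces to $u\ge0$ from the start using $\abs{d\abs{u}}=\abs{du}$. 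For the converse implication, assume the P\'olya--Szeg\H o inequality and let $A\subseteq B$ be measurable; choose $w$ admissible for $\capa(A,B)$ with $0\le w\le1$ and $\int_{\R^n}\abs{dw}^2\le\capa(A,B)+\varepsilon$. For $t\in(0,1)$ one has $A\subseteq\{w>t\}\subseteq B$ up to null sets, so by the monotonicity of $\cdot^*$ under inclusion---automatic, as one sees by rearranging $\chi_A+\chi_B$---we get $A^*\subseteq\{w>t\}^*=\{w^*>t\}\subseteq B^*$; hence $w^*$ is admissible for $\capa(A^*,B^*)$, whence $\capa(A^*,B^*)\le\int_{\R^n}\abs{dw^*}^2\le\int_{\R^n}\abs{dw}^2\le\capa(A,B)+\varepsilon$, and $\varepsilon\to0$ concludes.

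The main obstacle is the lemma, and within it the converse inequality: one must build, out of the set-level data alone, an honest $W^{1,2}$ competitor whose Dirichlet energy is \emph{exactly} $E_{\mathcal P}(v)$, which forces the use of the optimal potentials $w_k$ (not the truncations of $v$) and the observation that consecutive potentials have disjoint active regions, so that their energies add. The rest is measure-theoretic bookkeeping: a robust definition of $\capa(A,B)$ for merely measurable $A\subseteq B$, existence of the potentials $w_k$ when $\{v>t_{k-1}\}$ has infinite measure (handled by starting the partition at a level $t_0=\delta>0$ and letting $\delta\to0$ at the end), and carrying out the weak-compactness and lower-semicontinuity argument on all of $\R^n$ rather than on a bounded set.
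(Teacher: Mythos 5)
Your capacitary representation lemma \(\int_{\R^n}\abs{dv}^2=\sup_{\mathcal P}E_{\mathcal P}(v)\) is sound in spirit, and its hard half is, at bottom, the same mechanism as the paper's sufficiency argument (proposition~\ref{propositionPolyaSzego}): build a competitor for the rearranged function from layerwise near-optimal condenser functions, observe that their gradients have pairwise disjoint supports so the energies add, and pass to the limit by weak compactness and lower semicontinuity (your \(L^2\) bound replaces the paper's Dunford--Pettis step, which is only needed for general integrands). The genuine gap in the forward implication is your reduction to \(u\ge 0\) ``using \(\abs{d\abs u}=\abs{du}\)'': for a rearrangement acting on level sets, \(u^*\) is determined by the sets \(\{u>t\}\) for \emph{all} \(t\in\R\), including \(t<0\), and these are not determined by \(\abs u\); the Pólya--Szegő inequality for \(\abs u\) controls \((\abs u)^*\), not \(u^*\). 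Since \(u\in W^{1,2}(\R^n)\) may change sign, your lemma with partitions anchored at \(t_0=0\) does not cover the theorem; the paper handles this by letting the levels run through \([-n,n]\) (its approximant is \(-n+\sum_{k=-(n^2-1)}^{n^2}v_{n,k}\)). The fix is to use two-sided partitions with the bottom level tending to \(-\infty\), not the stated reduction.

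The second gap is in the converse. You apply the Pólya--Szegő hypothesis to a near-optimal admissible \(w\) for \(\capa(A,B)\), but the hypothesis is only assumed for \(u\in W^{1,2}(\R^n)\), hence requires \(w\in L^2(\R^n)\). Admissible functions in the definition of \(\capa\) are merely \(W^{1,1}_{\mathrm{loc}}\) with finite Dirichlet energy, and the theorem quantifies over condensers with plates of infinite measure (and, e.g., for \(n\le 4\), even finite-capacity condensers with \(B\) unbounded whose low-energy competitors decay like \(\abs{x}^{2-n}\notin L^2\)); for such \(A\subseteq B\) no competitor you can feed into the hypothesis lies in \(L^2(\R^n)\), so the step \(\int\abs{dw^*}^2\le\int\abs{dw}^2\) is unjustified. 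This is precisely the point to which the paper devotes the second half of its proof of theorem~\ref{theoremMain}: it first extends the inequality from \(W^{1,2}(\R^n)\) to bounded \(u\in W^{1,1}_{\mathrm{loc}}(\R^n)\) with \(du\in L^2(\R^n)\) via the truncations \(u_n=(u-\tfrac1n)_+\) under the normalization \eqref{eqFinite}, and only then invokes proposition~\ref{propositionPolyaSzegoNecessary}. A minor further point: exact capacitary minimizers \(w_k\) need not exist for the paper's notion of \(\capa\) (merely measurable plates, pointwise constraints), but this is harmless since your disjoint-support observation uses only admissibility, so \(\varepsilon\)-near minimizers suffice.
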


Here the relative capacity of a condenser is defined by 
\[
  \capa (A, B) = \inf \Bigl\{ \int_{\R^n} \abs{du}^2 : u \in W^{1, 1}_{\mathrm{loc}} (\R^n), \text{\(u = 0\) on \(\R^n \setminus B\) and \(u = 1\) on \(A\)}\Bigr\}. 
\]
This notion is different from usual definitions of relative capacity, but it coincides if \(A\) and \(B\) are bounded open sets
.

The necessity of the condition on capacities is already well-known. 
Indeed, symmetrization by rearrangement has been a fundamental tool to prove isoperimetric inequalities for capacities of condensers \citelist{\cite{PolyaSzego1945}\cite{Dubinin1985}\cite{Dubinin1987}\cite{Dubinin1991}\cite{Polya1950}\cite{PolyaSzego1951}\cite{Gehring1961}\cite{Sarvas1972}\cite{Wolontis1952}}. The essential new content in theorem~\ref{theoremMain} is that any transformation of sets that does not increase the capacity of condensers induces a transformation of functions that does not increase the Dirichlet integral. 

Theorem~\ref{theoremMain} is not a practical tool for proving P\'olya--Szeg\H o inequalities: capacity inequalities are not simpler conceptually to prove than P\'olya--Szeg\H o inequalities. Theorem~\ref{theoremMain} however asserts that P\'olya--Szeg\H o and capacity inequalities cannot be separated from each other.
Theorem~\ref{theoremMain} can be seen as a counterpart for the P\'olya--Szeg\H o inequality of the characterization of V. Maz\cprime{}ya of the Sobolev inequalities by inequalities involving the measure and the capacity of sets \citelist{\cite{Mazya1972}\cite{Mazya2005}\cite{Mazya2011}*{chapters 3 and 4}}.

Theorem~\ref{theoremMain} follows from a general equivalence between integral inequalities with weak derivatives and associated capacities for transformations acting on sublevel sets.
It is interesting that the preservation of the measure does not play in fact an essential role and that the principle extends to integrands that ensure local uniform integrability of the weak derivatives.

\section{Transformation of functions acting on level sets}

In this work, we shall consider transformations of functions acting on level sets (see \cite{VanSchaftingen2002}). 

\begin{definition}
The transformation \(\cdot^*\) is a \emph{transformation of functions acting on level sets} if every Borel measurable set \(A \subseteq M\) is mapped by \(\cdot^*\) to a Borel measurable set \(A^* \subseteq N\), if \(\emptyset^* = \emptyset\) and \(M^* = N\), and if every Borel measurable function \(u : M \to \Bar{\R}\) is mapped to a Borel measurable function \(u^* : N \to \Bar{\R}\) in such a way that for every \(t \in \R\),
\[
 \{y \in N \st u^* (y) > t \} = \{x \in M \st u (x) > t \}^*. 
\]
\end{definition}

Such transformations can be characterized by their action on sets.

\begin{proposition}
\label{propositionCharactSets}
A transformation of sets \(\cdot^*\) induces a transformations of functions acting on level 
sets if and only if 
\begin{enumerate}[(a)]
  \item \(\emptyset^* = \emptyset\) and \(M^* = N\),
  \item \(\cdot^*\) is monotone: if \(A \subseteq B\) are Borel measurable sets, then \(A^* \subseteq B^*\), 
  \item \(\cdot^*\) is continuous from the inside: if \((A_n)_{n \in \N}\) is an increasing sequence of Borel measurable subsets of \(M\), then 
\[
 \Bigl(\bigcup_{n \in \N} A_n\Bigr)^* = \bigcup_{n \in \N} A_n{}^*.
\]
\end{enumerate}
\end{proposition}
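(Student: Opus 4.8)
The plan is to prove the two implications separately: the forward one by feeding the defining level-set identity a few well-chosen test functions, and the backward one by reconstructing the transformation of functions explicitly from the transformation of sets and then verifying the identity directly.

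Assume first that \(\cdot^*\) induces a transformation of functions acting on level sets. Then (a) is literally part of the definition, so nothing is to be done. For monotonicity (b), given Borel measurable sets \(A \subseteq B\), I would apply the identity \(\{u^* > t\} = \{u > t\}^*\) to the single function \(u\) equal to \(2\) on \(A\), to \(1\) on \(B \setminus A\) and to \(0\) on \(M \setminus B\); since \(\{u > 1/2\} = B\) and \(\{u > 3/2\} = A\) while trivially \(\{u^* > 3/2\} \subseteq \{u^* > 1/2\}\), this yields \(A^* \subseteq B^*\). For continuity from the inside (c), given an increasing sequence of Borel measurable sets \((A_n)_{n \ge 1}\) with union \(A\) and writing \(A_0 = \emptyset\), I would test the identity on the Borel measurable function \(u\) equal to \(1/\min\{m \in \N \st x \in A_m\}\) on \(A\) and to \(0\) elsewhere, whose super-level sets are \(\{u > 1/n\} = A_{n-1}\) for \(n \ge 1\) and \(\{u > 0\} = A\); applying \(\cdot^*\) together with the identity \(\{u^* > 0\} = \bigcup_{n \ge 1} \{u^* > 1/n\}\), valid for every \(\Bar{\R}\)-valued function, then gives \(A^* = \bigcup_{n \ge 1} A_{n-1}{}^* = \bigcup_{n \ge 1} A_n{}^*\).

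For the converse, assume (a)--(c). For a Borel measurable \(u : M \to \Bar{\R}\) I would define the rearranged function by
\[
 u^* (y) = \sup \bigl\{ t \in \R \st y \in \{u > t\}^* \bigr\} ,
\]
with \(\sup \emptyset = -\infty\), and then verify the level-set identity. Writing \(A_t = \{u > t\}\), monotonicity (b) makes \(t \mapsto A_t{}^*\) nonincreasing, so \(u^*(y) > t\) at once gives \(y \in A_s{}^* \subseteq A_t{}^*\) for some \(s > t\); conversely, since \(A_t = \bigcup_n A_{t + 1/n}\) is an increasing union, continuity from the inside (c) gives \(A_t{}^* = \bigcup_n A_{t + 1/n}{}^*\), so \(y \in A_t{}^*\) forces \(y \in A_{t + 1/n}{}^*\) for some \(n\), whence \(u^*(y) \ge t + 1/n > t\). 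Thus \(\{u^* > t\} = A_t{}^* = \{u > t\}^*\) for every \(t \in \R\); since each \(\{u > t\}^*\) is Borel measurable and this property at all real levels \(t\) characterizes Borel measurability of \(\Bar{\R}\)-valued functions, \(u^*\) is Borel measurable, and together with (a) this is exactly the data the definition requires.

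The verifications are elementary bookkeeping; the single point where care is needed, and the only place where hypothesis (c) is genuinely used, is the inclusion \(\{u > t\}^* \subseteq \{u^* > t\}\) in the last paragraph, where one must approximate the super-level set \(\{u > t\}\) from the inside by the sets \(\{u > t + 1/n\}\) before applying \(\cdot^*\). This is what makes continuity from the inside indispensable rather than an artifact of the formulation.
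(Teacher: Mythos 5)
Your proof is correct and takes essentially the same route as the paper: the sufficiency direction rests on exactly the formula \(u^*(y) = \sup\,\bigl\{t \in \R \st y \in \{x \in M \st u(x) > t\}^*\bigr\}\) that the paper uses, the paper merely stating that the required property ``can be checked''. Your added details are sound --- the two-sided verification of \(\{u^* > t\} = \{u > t\}^*\), with (c) used precisely to approximate \(\{u > t\}\) from inside by the sets \(\{u > t + 1/n\}\), and the explicit test functions establishing the necessity of (a)--(c), a direction the paper leaves entirely to the reader.
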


The presentation of the present work is thus consistent with axiomatic definitions of rearrangement \citelist{\cite{BrockSolynin2000}*{\S 3}\cite{Kawohl1985}*{\S II.2}\cite{Willem2003}*{\S 21}\cite{Sarvas1972}\cite{CroweZweibelRosenbloom1986}\cite{VanSchaftingenWillem2004}}, except that here we do not assume any measure-preserving properties.
This prevents us from negliging null sets for a measure (this is the case for different reasons in \cite{VanSchaftingen2006AIHP}).
\begin{proof}%
[Proof of proposition~\ref{propositionCharactSets}]
It can be checked that the function \(u^* : N \to \Bar{\R}\) defined for every \(y \in N\) by
\[
  u^* (y) = \sup\, \bigl\{t \in \R \st y \in \{x \in M \st u (x) > t \}^*\bigr\}
\]
satisfies the required property if \(\cdot^*\) is monotone and continuous from the inside.
\end{proof}

The transformations can also be characterized by their action on functions.

\begin{proposition}
Let \(\cdot^*\) be a transformation of Borel measurable functions.
Then, the transformation \(\cdot^*\) is induced by the action on level sets of a transformation of measurable functions if and only if 
\begin{enumerate}[(a)]
  \item \(0^* = 0\),
  \item \(\cdot^*\) is monotone: if \(u : M \to \Bar{\R}\) and \(v : M \to \Bar{\R}\) are Borel measurable functions and \(u \le v\) in \(M\), then \(u^* \le v^*\),
  \item \(\cdot^*\) is continuous: for every nondecreasing sequence of Borel measurable functions \((u_n)_{n \in \N}\) from \(M\) to \(N\), the sequence \((u_n{}^*)_{n \in \N}\)converges to the function \((\lim_{n \to \infty} u_n)^*\),
  \item \(\cdot^*\) commutes with composition: for every nondecreasing continuous function  \(f : \Bar{\R} \to \Bar{\R}\) and for every Borel measurable function \(u : M \to \R\), 
\[
 f \circ (u^*)= (f \circ u)^*.
\]
\end{enumerate}
\end{proposition}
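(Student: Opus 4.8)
The plan is to prove both implications. For the \emph{direct implication}, suppose \(\cdot^*\) is induced on level sets by a transformation of sets \(T\) obeying the three conditions of Proposition~\ref{propositionCharactSets}. I would then verify (a) by observing that \(\{0 > t\}\) equals \(M\) for \(t < 0\) and \(\emptyset\) for \(t \ge 0\); (b) from the monotonicity of \(T\) together with \(\{u > t\} \subseteq \{v > t\}\) whenever \(u \le v\); (c) from the inner continuity of \(T\), using that \(\{u_n > t\}\) increases to \(\{\sup_{n \in \N} u_n > t\}\) when \((u_n)_{n \in \N}\) is nondecreasing; and (d) by noting that, \(f\) being continuous and nondecreasing, for each \(t \in \R\) the set \(f^{-1}((t, +\infty])\) is either empty, or of the form \((c, +\infty]\) with \(c \in \R\), or \((-\infty, +\infty]\), or all of \(\Bar{\R}\) — a set \([c, +\infty]\) with \(c\) finite being excluded precisely by the continuity of \(f\) — so that \(\{f \circ u > t\}\) is respectively \(\emptyset\), \(\{u > c\}\), \(\bigcup_{s \in \R}\{u > s\}\), or \(M\); applying \(T\) (and inner continuity in the third case) then yields \(\{(f \circ u)^* > t\} = \{f \circ u^* > t\}\) for every \(t \in \R\), hence \((f \circ u)^* = f \circ u^*\).

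For the \emph{converse}, assume (a)--(d). I would define a transformation of sets by
\[
 T(A) := \bigl\{ y \in N \st (\chi_A)^* (y) > 0 \bigr\}
\]
for every Borel set \(A \subseteq M\), \(\chi_A\) denoting the characteristic function of \(A\). Then \(T(\emptyset) = \emptyset\) by (a) since \(\chi_\emptyset = 0\); \(T(M) = N\) because applying (d) to the constant function \(f \equiv 1\) and to \(u \equiv 0\) gives \((\chi_M)^* = (f \circ 0)^* = f \circ 0^* = f \circ 0 = 1\); \(T\) is monotone by (b) applied to \(\chi_A \le \chi_B\) for \(A \subseteq B\); and \(T\) is inner continuous by (c) applied to \(\chi_{A_n} \upto \chi_{\bigcup_{n \in \N} A_n}\), whose transforms form a nondecreasing sequence by (b). Thus \(T\) satisfies the hypotheses of Proposition~\ref{propositionCharactSets} and induces a transformation of functions acting on level sets, which I write \(u \mapsto u^\sharp\) and which is characterised by \(\{u^\sharp > t\} = T(\{u > t\})\).

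It then remains to show that \(u^* = u^\sharp\) for every Borel \(u : M \to \Bar{\R}\); the crux is the identity, holding for every \(t \in \R\) and every real-valued Borel \(u\),
\[
 \bigl(\chi_{\{u > t\}}\bigr)^* = \chi_{\{u^* > t\}} .
\]
To obtain it I would choose a nondecreasing sequence of continuous nondecreasing functions \(g_n : \Bar{\R} \to \Bar{\R}\) with \(g_n = 0\) on \((-\infty, t]\) and \(g_n = 1\) on \([t + \tfrac1n, +\infty]\), so that \(g_n \circ v \upto \chi_{\{v > t\}}\) pointwise for every \(\Bar{\R}\)-valued \(v\); then (c) gives \((g_n \circ u)^* \to (\chi_{\{u > t\}})^*\), (d) gives \((g_n \circ u)^* = g_n \circ u^*\), and \(g_n \circ u^* \to \chi_{\{u^* > t\}}\) pointwise, which together give the identity. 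It follows that, for real-valued \(u\), the set \(T(\{u > t\}) = \bigl\{(\chi_{\{u > t\}})^* > 0\bigr\}\) equals \(\{u^* > t\}\) for every \(t \in \R\), so \(u^\sharp = u^*\). For a general Borel \(u : M \to \Bar{\R}\), I would fix a continuous increasing bijection \(\theta : \Bar{\R} \to [-1, 1]\), extend \(\theta^{-1}\) to a continuous nondecreasing \(\hat\theta : \Bar{\R} \to \Bar{\R}\) with \(\hat\theta \circ \theta = \id_{\Bar{\R}}\), and apply (d) with \(f = \hat\theta\) to the \emph{real-valued} function \(\theta \circ u\): this gives \(u^* = \hat\theta \circ (\theta \circ u)^* = \hat\theta \circ (\theta \circ u)^\sharp = u^\sharp\), where the middle equality is the real-valued case already settled and the last equality holds because \(\cdot^\sharp\), being induced on level sets, also satisfies (d) by the direct implication.

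The step I expect to demand the most care is this last identification, together with the treatment of infinite values: because \(u^*\) is a priori only \(\Bar{\R}\)-valued, and because a two-sided truncation \(u \mapsto \min(\max(u, -n), n)\) fails to be monotone in \(n\) so that (c) cannot be applied to it, the extended-real-valued case is reduced to the real-valued one through composition with a homeomorphism rather than by truncation. A secondary point is, in the direct implication, the use of the continuity of \(f\) to exclude superlevel sets of \(f \circ u\) of the form \(\{u \ge c\}\), which in general are not expressible through the strict superlevel sets of \(u\) alone.
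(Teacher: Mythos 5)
Your proof is correct, and its engine is the same as the paper's: approximate \(\chi_{(t,+\infty]}\) from below by continuous nondecreasing functions and combine (c) and (d) to identify \(\{y \in N \st u^*(y)>t\}\) with the transform of \(\{x \in M \st u(x)>t\}\). The packaging, however, differs in three ways. Where you define the set transformation as the positivity set \(T(A)=\{(\chi_A)^*>0\}\) and then route through proposition~\ref{propositionCharactSets} to build an auxiliary induced transformation \(u\mapsto u^\sharp\) before identifying \(u^*=u^\sharp\), the paper proceeds more directly: it picks a continuous nondecreasing \(f\) whose only fixed points are \(0\) and \(1\), so that \((\chi_A)^*=(f\circ\chi_A)^*=f\circ(\chi_A)^*\) forces \((\chi_A)^*\) to be itself a characteristic function \(\chi_{A^*}\), and then proves \(\{u^*>t\}=\{u>t\}^*\) for arbitrary \(u\) in one stroke; your detour through \(\cdot^\sharp\) is harmless but redundant for real-valued \(u\), since \(T(\{u>t\})=\{u^*>t\}\) already is the desired identity, and it only earns its keep in your extension step. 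Second, you prove the direct implication explicitly (including the case analysis of \(f^{-1}((t,+\infty])\) for (d)), which the paper leaves to the reader. Third, your reduction of \(\Bar{\R}\)-valued functions to real-valued ones via a homeomorphism \(\theta:\Bar{\R}\to[-1,1]\) addresses a point the paper's proof glosses over: hypothesis (d) is stated for real-valued \(u\), while the paper applies it to \(\Bar{\R}\)-valued \(u\); your argument closes that gap cleanly (alternatively, one can note that constants are preserved by (d), so \((\theta\circ u)^*\) takes values in \([-1,1]\) by (b), and conclude without invoking \(\cdot^\sharp\)). One micro-remark on your direct implication: continuity of \(f\) must also rule out \(f^{-1}((t,+\infty])=\{+\infty\}\), not only \([c,+\infty]\) with \(c\) finite—this matters because \(\{f\circ u>t\}\) would then be empty while \(\{f\circ u^*>t\}=\{u^*=+\infty\}\) need not be—and the same continuity argument you use does exclude it, so your enumeration is correct even though the stated justification mentions only the finite-\(c\) case.
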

\begin{proof}
First we choose a continuous nondecreasing function \(f : \Bar{\R} \to \Bar{\R}\) such that \(f (0) = 0\), \(f (1) = 1\) and \(f (t) \ne t\) if \(t \in \Bar{\R} \setminus \{0, 1\}\).
If \(A \subset M\) is a Borel measurable set, we have \(f \circ \chi_A= \chi_A\) and thus
\(
  (\chi_A)^*=(f \circ \chi_A)^* = f \circ \chi_A^*.
\)
Hence, \((\chi_A)^*\) is a characteristic functions and the transformation of sets can be defined by 
\(
 \chi_{A^*} = \bigl(\chi_A\bigr)^*.
\)
Given a real number \(t \in \R\) and a Borel measurable \(u : M \to \Bar{\R}\), we define a nondecreasing sequence of nondecreasing continuous maps \((f_n)_{n \in \N}\) from \(\Bar{\R}\) to \(\Bar{\R}\) that converges to \(\chi_{(t,\infty]}\). The nondecreasing sequence  
\((f_n \circ u)_{n \in \N}\) converges to \(\chi_{A_t}\),
where \(A_t=\{x \in M \st u (x) > t \}\). Therefore, by our assumption,
\[
  f_n \circ (u^*) = (f_n \circ u)^* \to \chi_{A_t^*},
\]
as \(n \to \infty\),
from which we conclude that \(\{y \in N \st u^* (y) > t\} = \{x \in M \st u (x) > t\}^*\).
\end{proof}

\section{The general equivalence}
Given an \(m\)--dimensional manifold \(M\), the nonnegative density bundle \(\mathcal{D}_+ M\) is a fiber bundle whose fibers are maps \(\delta : \Lambda^m T_x M \to [0, \infty)\) such that for every linear map \(A : T_x M \to T_x M\), \(\delta \circ \Lambda^m A = \abs{\det A} \delta\) (see for example \citelist{\cite{Lee2013}*{p. 427--432}\cite{BergerGostiaux1988}*{\S 3.3}}).
Given a nonnegative Borel measurable section \(\varphi : T^*M \to \mathcal{D}_+ M\) from the cotangent bundle to the density bundle and \(\lambda > 0\), we define the capacity of a pair of Borel-measurable sets \(A \subseteq B \subseteq M\) by
\begin{equation}
\label{eqCapa}
 \capa_{\varphi, \lambda} (A, B) = \inf \Bigl\{ \int_{M} \varphi (du) \st u \in W^{1, 1}_{\mathrm{loc}} (M) \text{ , } u = \lambda \text{ on \(A\) and } u = 0 \text{ in \(M \setminus B\)} \Bigr\}.
\end{equation}
In this definition, \(W^{1, 1}_\mathrm{loc} (M)\) denotes the set of functions \(u : M \to \R\) such that for every local chart \(\gamma : U \to M\), \(u \circ \gamma \in W^{1, 1}_\mathrm{loc} (U)\); this notion only depends on the differential structure of \(M\) and does not depend on a Riemannian structure on \(M\).

In this definition of capacity, the set of the infimum might be empty and the capacity infinite. 

Finally, we say that the bundle map \(\psi : T^*N \to \mathcal{D}_+ N \to \Bar{\R}^+\) is locally coercive if for every 
for every compact set \(K \subseteq N\)
\[
 \lim_{r \to \infty} \inf_{\substack{\xi \in T^*N\\ \pi (\xi) \in K\\ \abs{\xi} \ge r}} \frac{\abs{\psi (\xi)}}{\abs{\xi}} = \infty,
\]
where \(\abs{\cdot}\) denotes any continuous norms on the bundles \(T^*N\) and \(\mathcal{D}_+ N\), and \(\pi : T^*N \to N\) is the canonical projection of the bundle.

\begin{proposition}[Sufficient condition for the P\'olya--Szeg\H o inequality]
\label{propositionPolyaSzego}
Let \(\varphi : T^*M \to \mathcal{D}_+ M\) and  \(\psi : T^*N \to \mathcal{D}_+ N\) be Borel measurable bundle maps such that \(\varphi (0) = 0\) and \(\psi (0) = 0\).
Assume that \(\psi\) is locally coercive and for every \(y \in N\), \(\psi \vert_{T^*_y N}\) is a convex lower semi-continuous function.
If for every Borel-measurable sets \(A \subseteq B \subseteq M\) and every \(\lambda > 0\),
\[
 \capa_{\psi, \lambda} (A^*, B^*) \le \capa_{\varphi, \lambda} (A, B),
\]
if \(u \in W^{1, 1}_{\mathrm{loc}} (M)\) is a Borel measurable function, if \(u^* \in L^1_\mathrm{loc} (N)\) and if 
\[
  \int_{M} \varphi (du) < \infty,
\]
then \(u^* \in W^{1, 1}_{\mathrm{loc}} (M)\) and  
\[
\int_{N} \psi (du^*) \le \int_{M} \varphi (du).
\]
\end{proposition}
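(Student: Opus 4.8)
The plan is a capacitary ``layer-cake'' argument: I would estimate $\int_M\varphi(du)$ from below by a sum of capacities of the condensers $(A_{t_i},A_{t_{i-1}})$ built from consecutive superlevel sets $A_t=\{x\in M\st u(x)>t\}$, transport these through the hypothesis, and reassemble from near-optimal competitors on $N$ a function close to $u^*$. First I would reduce to bounded $u$. Put $u_n=\min\{\max\{u,-n\},n\}$; the superlevel sets of $(u_n)^*$ and of $\min\{\max\{u^*,-n\},n\}$ agree at every level, so these two functions coincide, hence $(u_n)^*$ is bounded and in $L^1_{\mathrm{loc}}(N)$, and $du_n=\chi_{\{-n<u<n\}}\,du$ gives $\int_M\varphi(du_n)\le\int_M\varphi(du)$. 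Granting the statement for bounded functions, $(u_n)^*\in W^{1,1}_{\mathrm{loc}}(N)$ with $\int_N\psi(d(u_n)^*)\le\int_M\varphi(du)$; since $u^*\in L^1_{\mathrm{loc}}(N)$ by hypothesis, dominated convergence gives $(u_n)^*\to u^*$ in $L^1_{\mathrm{loc}}(N)$, and the general case follows from the limiting argument below.

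Assume now $a\le u\le b$. Fix a partition $t_0<\dots<t_k$ with $t_0<a$, $t_k>b$, mesh $\eta=\max_i(t_i-t_{i-1})$, and set $\lambda_i=t_i-t_{i-1}$. For the Lipschitz truncations $g_i(s)=\min\{(s-t_{i-1})_+,\lambda_i\}$, the function $v_i=g_i\circ u$ is admissible for $\capa_{\varphi,\lambda_i}(A_{t_i},A_{t_{i-1}})$ and $dv_i=\chi_{\{t_{i-1}<u<t_i\}}\,du$; as these supports are pairwise disjoint and $\varphi\ge0$,
\[
 \sum_{i=1}^{k}\capa_{\varphi,\lambda_i}(A_{t_i},A_{t_{i-1}})\le\sum_{i=1}^{k}\int_M\varphi(dv_i)\le\int_M\varphi(du)<\infty,
\]
and by the hypothesis $\sum_i\capa_{\psi,\lambda_i}(A_{t_i}^*,A_{t_{i-1}}^*)$ obeys the same bound, so each term is finite. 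Given $\epsilon>0$, I would choose for each $i$ an admissible competitor for $\capa_{\psi,\lambda_i}(A_{t_i}^*,A_{t_{i-1}}^*)$, truncated into $[0,\lambda_i]$ (this keeps it admissible and, since $\psi\ge0$ and $\psi(0)=0$, does not increase $\int_N\psi(d\cdot)$), with $\psi$-energy at most $\capa_{\psi,\lambda_i}(A_{t_i}^*,A_{t_{i-1}}^*)+\epsilon$; call it $\phi_i$ and set $\Phi=t_0+\sum_{i=1}^{k}\phi_i\in W^{1,1}_{\mathrm{loc}}(N)$. Two points must be checked. First, $\norm{\Phi-u^*}_{L^\infty(N)}\le\eta$: by monotonicity of $\cdot^*$ and $\emptyset^*=\emptyset$, $M^*=N$, the sets $A_{t_j}^*$ form a nested chain $N=A_{t_0}^*\supseteq\cdots\supseteq A_{t_k}^*=\emptyset$; writing $j(y)$ for the largest index with $y\in A_{t_{j(y)}}^*$, the constraints on $\phi_i$ force $\phi_i(y)=\lambda_i$ for $i\le j(y)$ and $\phi_i(y)=0$ for $i\ge j(y)+2$, so $\Phi(y)=t_{j(y)}+\phi_{j(y)+1}(y)\in[t_{j(y)},t_{j(y)+1}]$, while the identity $u^*(y)=\sup\{t\st y\in A_t^*\}$ places $u^*(y)$ in the same interval. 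Second, $\int_N\psi(d\Phi)\le\int_M\varphi(du)+k\epsilon$: being constant on $A_{t_i}^*$ and on $N\setminus A_{t_{i-1}}^*$, $\phi_i$ has gradient vanishing almost everywhere off $S_i:=A_{t_{i-1}}^*\setminus A_{t_i}^*$, and the $S_i$ are pairwise disjoint with $\bigcup_iS_i=N$ (telescoping of the nested chain), so $d\Phi=d\phi_i$ almost everywhere on $S_i$ and $\int_N\psi(d\Phi)=\sum_i\int_{S_i}\psi(d\phi_i)\le\sum_i\int_N\psi(d\phi_i)$. Refining the partition with $k\epsilon\to0$ produces $\Phi_m\in W^{1,1}_{\mathrm{loc}}(N)$ with $\Phi_m\to u^*$ uniformly and $\limsup_m\int_N\psi(d\Phi_m)\le\int_M\varphi(du)$.

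The last ingredient, invoked twice, is: if $\Phi_m\to u^*$ in $L^1_{\mathrm{loc}}(N)$ and $\sup_m\int_N\psi(d\Phi_m)=:C<\infty$, then $u^*\in W^{1,1}_{\mathrm{loc}}(N)$ and $\int_N\psi(du^*)\le C$. On a compact $K\subseteq N$, local coercivity of $\psi$ is a de la Vall\'ee-Poussin condition, so $(d\Phi_m)$ is bounded and equi-integrable in $L^1(K)$; Dunford--Pettis yields a weakly convergent subsequence, whose limit is $du^*$ because $\Phi_m\to u^*$ in $L^1_{\mathrm{loc}}$, so $u^*\in W^{1,1}_{\mathrm{loc}}(N)$. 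Since $\psi\vert_{T^*_yN}$ is convex, lower semi-continuous and nonnegative, $w\mapsto\int_K\psi(dw)$ is sequentially weakly lower semi-continuous on $L^1(K)$, hence $\int_K\psi(du^*)\le\liminf_m\int_K\psi(d\Phi_m)\le C$; letting $K\upto N$ gives the claim, and applying it to the two sequences above (the approximants $\Phi_m$, and then the $(u_n)^*$) completes the proof.

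I expect the genuine obstacle to be the estimate $\int_N\psi(d\Phi)\le\sum_i\int_N\psi(d\phi_i)$ in the bounded case: in general $\psi(\sum_i\xi_i)$ can be far larger than $\sum_i\psi(\xi_i)$, so this is false without extra structure, and it is rescued only by the fact that the gradients of the building blocks $\phi_i$ live on the pairwise disjoint ``annuli'' $S_i=A_{t_{i-1}}^*\setminus A_{t_i}^*$, which itself relies on a Sobolev gradient vanishing almost everywhere on each level set. The reduction to bounded $u$ and the weak compactness plus lower semi-continuity are routine, using respectively the way $\cdot^*$ acts on level sets and the coercivity and fiberwise convexity of $\psi$.
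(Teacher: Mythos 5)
Your proof is correct and follows essentially the same route as the paper: discretize the levels, take near-optimal competitors for the capacities of consecutive superlevel condensers, sum them (using that their gradients live on the disjoint sets \(A_{t_{i-1}}^*\setminus A_{t_i}^*\)) to get approximants uniformly close to \(u^*\), and conclude by equi-integrability/Dunford--Pettis plus convex lower semicontinuity, which is exactly the paper's lemma~\ref{lemmaCoercivityCompactness}. The only differences are organizational --- you first truncate to reduce to bounded \(u\) and apply the compactness/lsc step twice, whereas the paper handles unbounded \(u\) directly with the levels \(k/n\), \(\abs{k}\le n^2\) --- and you make explicit the disjoint-gradient-support step that the paper leaves implicit.
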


Proposition~\ref{propositionPolyaSzego} is stated in a quite general framework. It works in particular for rearrangement with respect to an arbitrary convex set \citelist{\cite{AlvinoFeroneLionsTrombetti1997}\cite{VanSchaftingen2006AIHP}\cite{Cianchi2007}}, for monotone rearrangements on cylinders, acting either fiberwise \citelist{\cite{BerestyckiLachandRobert2004}\cite{VanSchaftingen2006PAMS}} or globally on the cylinder \citelist{\cite{Alberti2000}\cite{Carbou1995}}.
It also applies on spheres or the hyperbolic spaces \cite{Baernstein1994}, weighted inequalities \citelist{\cite{GarsiaRodemich}\cite{Landes2007}}, and to inequalities in which the functional is also symmetrized \citelist{\cite{Klimov1999}\cite{VanSchaftingen2006AIHP}\cite{Cianchi2007}}.

The local coercivity assumption is essential. Indeed, if \(M = N = \R\),
\[
A^* = 
\begin{cases}
(0, \mathcal{L}^1 (A)) & \text{if \(\mathcal{L}^1 (A) \le 1\)},\\
(\mathcal{L}^1(A)/2 - \frac{1}{2}, \mathcal{L}^1 (A) + \tfrac{1}{2}) & \text{if \(\mathcal{L}^1 (A) > 1\)},
\end{cases}
\]
\(\varphi (\xi) = \psi (\xi) = \abs{\xi}\) and \(u (x) = \max (1 - \abs{x}, 0)\), then \(u \in W^{1, 1}_{\mathrm{loc}} (\R)\), \(\int_{\R} \varphi (du) < \infty\) and \(u^* \not \in W^{1, 1}_\mathrm{loc} (\R)\).
The reader will observe that \(u^*\) is still a function of bounded variation; we shall not pursue in this direction.

Our proof of proposition~\ref{propositionPolyaSzego} will rely on the following lower semi-continuity result.

\begin{lemma}
\label{lemmaCoercivityCompactness}
Let \(\psi : T^*N \to \Bar{\R}^+\) be a Borel measurable bundle map such that \(\psi (0) = 0\).
Assume that \(\psi\) is locally coercive and for every \(y \in N\), \(\psi \vert_{T^*_y N}\) is a convex lower semi-continuous function.
If the sequence \((v_n)_{n \in \N}\) converges to \(v : N \to \Bar{\R}\) in \(L^1_{\mathrm{loc}} (N)\) and if 
\[
 \liminf_{n \to \infty} \int_{N} \psi (d v_n) < \infty,
\]
then \(v \in W^{1, 1}_\mathrm{loc} (N)\) and 
\[
 \int_{N} \psi (d v) \le \liminf_{n \to \infty} \int_{N} \psi (d v_n).
\]
\end{lemma}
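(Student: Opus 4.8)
The plan is to localise everything to coordinate charts and then to combine two ingredients: the superlinear growth encoded by local coercivity, which via the de la Vall\'ee--Poussin criterion upgrades an $L^1$-bound on the gradients to weak $L^1$-precompactness, and the classical lower semicontinuity of convex integral functionals under weak $L^1$-convergence of gradients. First I would fix a countable, locally finite atlas $(\gamma_i : U_i \to N)_{i \in \N}$ with $U_i \subseteq \R^m$ open, together with a subordinate partition of unity $(\chi_i)_{i \in \N}$ with $\supp \chi_i$ a compact subset of $\gamma_i (U_i)$. Trivialising $T^*N$ and $\mathcal{D}_+ N$ over $U_i$, the density $\psi (dw)$ of a function $w \in W^{1,1}_{\mathrm{loc}}(N)$ is represented on $\gamma_i (U_i)$ by $g_i (x, \nabla \tilde w_i (x))\, dx$, where $\tilde w_i = w \circ \gamma_i$ and, because $\psi$ is Borel and fibrewise convex and lower semi-continuous, $g_i : U_i \times \R^m \to [0, \infty]$ is a Borel integrand with $g_i (x, \cdot)$ convex and lower semi-continuous; moreover, comparing the chosen continuous norms with the Euclidean norm and the Lebesgue density on compact subsets of $U_i$, local coercivity of $\psi$ becomes: for every compact $K \subseteq U_i$ and every $j \in \N$ there is $R > 0$ with $g_i (x, \xi) \ge j \abs{\xi}$ whenever $x \in K$ and $\abs{\xi} \ge R$.

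Now I would pass to a subsequence, not relabelled, along which $\int_N \psi (dv_n) \to L := \liminf_{n \to \infty} \int_N \psi (dv_n) < \infty$; all the $\liminf$'s below are taken along this subsequence, and since they equal $L$, this suffices. As $v_n \to v$ in $L^1_{\mathrm{loc}}(N)$ implies $\tilde v_{i,n} := v_n \circ \gamma_i \to \tilde v_i$ in $L^1_{\mathrm{loc}}(U_i)$, and since $\int_K g_i (\cdot, \nabla \tilde v_{i,n})\, dx = \int_{\gamma_i (K)} \psi (dv_n) \le \int_N \psi (dv_n)$ for every compact $K \subseteq U_i$ (nonnegativity of densities and coordinate-independence of the integral), there is $C_K < \infty$ with $\int_K g_i (\cdot, \nabla \tilde v_{i,n})\, dx \le C_K$ for all $n$. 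The coercivity estimate then gives $\int_{\{x \in K \st \abs{\nabla \tilde v_{i,n}(x)} \ge R\}} \abs{\nabla \tilde v_{i,n}}\, dx \le C_K / j$, whence $\int_E \abs{\nabla \tilde v_{i,n}}\, dx \le C_K / j + R \abs{E}$ for measurable $E \subseteq K$; choosing $j$ large and then $\abs{E}$ small shows $(\nabla \tilde v_{i,n})_n$ is uniformly integrable on $K$. By the Dunford--Pettis theorem it is relatively weakly compact in $L^1 (K)$, and every weakly convergent subsequence has limit $\nabla \tilde v_i$ because $\tilde v_{i,n} \to \tilde v_i$ in $\mathcal{D}'(U_i)$; hence $\nabla \tilde v_{i,n} \weakto \nabla \tilde v_i$ in $L^1 (K)$. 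In particular $\tilde v_i = v \circ \gamma_i \in W^{1,1}_{\mathrm{loc}}(U_i)$ for every $i$, which is exactly $v \in W^{1,1}_{\mathrm{loc}}(N)$.

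It remains to prove the inequality. Fix $i$ and set $\tilde\chi_i = \chi_i \circ \gamma_i$, $K = \supp \tilde\chi_i$; the integrand $(x,\xi) \mapsto \tilde\chi_i (x)\, g_i (x,\xi)$ is nonnegative, Borel, and convex and lower semi-continuous in $\xi$. Applying Mazur's lemma to the weakly convergent sequence $(\nabla \tilde v_{i,n})_n$ — after extracting a sub-subsequence realising $\liminf_n \int_K \tilde\chi_i\, g_i (\cdot, \nabla \tilde v_{i,n})\, dx$, taking convex combinations $w_p = \sum_{k \ge p} \lambda_{p,k} \nabla \tilde v_{i,n_k}$ that converge to $\nabla \tilde v_i$ strongly in $L^1(K)$ and, along a further subsequence, almost everywhere — and using pointwise convexity, pointwise lower semi-continuity, and Fatou's lemma, one obtains $\int_N \chi_i\, \psi (dv) \le \liminf_{n \to \infty} \int_N \chi_i\, \psi (dv_n)$ (alternatively, this is Ioffe's lower semicontinuity theorem for nonnegative normal integrands). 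Summing over $i$, using $\sum_i \chi_i \equiv 1$ and superadditivity of the lower limit for series of nonnegative terms,
\[
 \int_N \psi (dv) = \sum_{i} \int_N \chi_i\, \psi (dv) \le \sum_{i} \liminf_{n \to \infty} \int_N \chi_i\, \psi (dv_n) \le \liminf_{n \to \infty} \int_N \psi (dv_n) = L,
\]
which is the asserted inequality.

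The main obstacle is the passage from the mere $L^1$-bound on $(\nabla \tilde v_{i,n})_n$ to genuine weak $L^1$-compactness: without the superlinear growth furnished by local coercivity one would only control the gradients as bounded measures, the limit $v$ would a priori lie in $BV_{\mathrm{loc}}$ rather than $W^{1,1}_{\mathrm{loc}}$, and the lower semicontinuity inequality could genuinely fail — which is exactly the phenomenon illustrated by the one-dimensional example following Proposition~\ref{propositionPolyaSzego}. The remaining points (normality and coordinate behaviour of the integrands, the Mazur/Fatou lower semicontinuity, and the partition-of-unity gluing) are routine.
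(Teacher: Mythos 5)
Your proof is correct and follows essentially the same route as the paper: local coercivity yields equi-integrability of the gradients on compact sets, Dunford--Pettis gives weak \(L^1\) compactness, the weak limit is identified with \(dv\) via the \(L^1_{\mathrm{loc}}\) convergence of \((v_n)\), and one concludes by weak lower semicontinuity of the convex integrand and by gluing over an exhaustion of \(N\). The only differences are presentational (explicit charts with a partition of unity, Mazur plus Fatou instead of citing a lower semicontinuity theorem, superadditivity of the liminf instead of monotone convergence), and they do not change the substance of the argument.
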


\begin{proof}
Without loss of generality, we can assume that 
\[
  \limsup_{n \to \infty} \int_{N} \psi (d v_n)=\liminf_{n \to \infty} \int_{N} \psi (d v_n).
\]
Given \(\varepsilon > 0\), by assumption there exists \(r > 0\) such that if \(\xi \in T^*N\), \(\pi (\xi) \in K\) and \(\abs{\xi} \ge r\), then
\[
 \abs{\xi} \le \varepsilon \abs{\psi (\xi)}.
\]
Since \(\psi\) is nonnegative, for every \(\xi \in \pi^{-1} (K)\), 
\[
 \abs{\xi} \le r + \varepsilon \abs{\psi (\xi)}.
\]
Since the norm on the densities is continuous, there exists a continuous density \(w : M \to \mathcal{D}_+ M\) such that for every \(\delta \in \mathcal{D}_+ M\), \(\abs{\delta}w = \delta\).
Then, for every \(n \in \N\)  and every set \(A \subseteq N\)
\[
 \int_{A} \abs{d v_n}w \le r \int_{A} w + \varepsilon \int_{A} \psi (d v_n).
\]
Hence if the set \(A\) is taken so that \(\int_{A} w \le \frac{\varepsilon}{r}\),
\[
 \int_{A} \abs{d v_n}w \le \varepsilon \Bigl( 1 + \int_{N} \psi (dv_n) \Bigr).
\]
By the boundedness assumption, we have proved that the sequence \((\abs{d v_n})_{n \in \N}\) is equi-integrable on every compact compact subset \(K \subset N\). By the Dunford--Pettis compactness criterion \citelist{\cite{Wojtaszczyk1991}*{theorem III.C.17}\cite{DunfordSchwartz}*{theorem IV.8.9}\cite{Bogachev2007}*{theorem 4.7.18}}, the sequence \((d v_n)_{n \in \N}\) is weakly compact in \(L^1(K)\). Since the sequence \((v_n)_{n \in \N}\) converges to \(v\) in \(L^1 (K)\), it converges to \(v\) in \(W^{1, 1}_\mathrm{loc} (N)\).
Recalling that for every \(y \in N\), the function \(\psi \vert_{T_y^* N}\) is nonnegative, lower semicontinuous and convex, we conclude that for every compact set \(K \subseteq N\),
\[
  \int_{K} \psi (dv) \le \lim_{n \to \infty} \int_{N} \psi (d v_n).
\]
(see for example \cite{EkelandTemam1976}*{corollary 2.2}), and we conclude with Lebesgue's monotone convergence theorem, by writing the manifold \(N\) as a countable union of compact subsets.
\end{proof}

\begin{proof}[Proof of proposition~\ref{propositionPolyaSzego}]
We define for every \(t \in \R\) the sublevel set 
\[
  A_t = \{ x \in \R^n : u (x) > t \}.
\]
For every \(n \in \N_*\) and for every \(k \in \Z\), by our definition of capacity \eqref{eqCapa}, there exists a function \(v_{n, k} \in W^{1, 1}_\mathrm{loc} (N)\) such that \(0 \le v_{n, k} \le \frac{1}{n}\), \(v_{n, k} = 1/n\) on \(A_{k/n}^*\), \(v_{n, k} = 0\) on \(N \setminus A_{(k-1)/n}^*\), and 
\[
  \int_{N} \psi (d v_{n, k}) \le 
   \capa_{\psi,1/n} (A_{k/n}^*, A_{(k-1)/n}^*) + \frac{1}{n^3}.
\]
By our the capacity inequality of our assumption and the definition of capacity \eqref{eqCapa} again, we deduce that 
\[
  \int_{N} \psi (d v_{n, k})    
   \le  \capa_{\varphi,1/n} (A_{k/n}, A_{(k-1)/n}) + \frac{1}{n^3}
   \le \int_{A_{k/n} \setminus A_{(k-1)/n}} \varphi (d u) + \frac{1}{n^3}.
\]
We define the function \(v_n : N \to \R\) by 
\[
  v_n = - n + \sum_{k=-(n^2 - 1)}^{n^2} v_{n,k} 
\]
One has, since \(\varphi\) is nonnegative and \(\varphi (0) = 0\),
\[
 \int_{N} \psi (dv_n) \le \sum_{k=-(n^2 - 1)}^{n^2} \Bigl(\capa_{\varphi,1/n} (A_{k/n}, A_{(k-1)/n}) + \frac{1}{n^3}\Bigr)
 \le \int_{M} \varphi (d u) + \frac{2}{n}.
\]
Since \(\abs{v_n - u^*} \le \frac{1}{n}\) on \(A_n\setminus A_{-n}\), \(\abs{v_n} \le \abs{u_*} + \frac{1}{n}\) in \(N\) and \(u_* \in L^1_{\mathrm{loc}} (N)\), by Lebesgue's dominated convergence theorem, the sequence \((v_n)_{n \in \N}\) converges to \(u^*\) in \(L^1_{\mathrm{loc}} (N)\).
In view of lemma~\ref{lemmaCoercivityCompactness}, we have \(u^* \in W^{1, 1}_{\mathrm{loc}} (M)\) and 
\[
 \int_{N} \psi (d u^*) \le \liminf_{n \to \infty} \int_{N} \psi (d v_n)
 \le \int_{M} \varphi (du).\qedhere
\]
\end{proof}

Parts of this proof are reminiscent of the proof of the conductor inequality \cite{Mazya2011}*{proposition 3.3}.

\begin{remark}
The assumption that \(u^*\) is locally summable can be avoided by working with Sobolev spaces by truncation: a function \(u : M \to N\) is in \(\mathcal{T}^{1, 1}_{\mathrm{loc}} (M)\) if for every \(K \in [0, \infty)\), \(u_K = \max (\min (u, K), -K) \in W^{1, 1}_{\mathrm{loc}} (M)\); one defines then \(d u = d u_K\) on \(u^{-1}([-K, K])\) \cite{BBGGPV1995} (Such functions qeneralize to colocally weakly differentiable maps between manifolds \cite{ConventVanSchaftingen}.)
For every \(K \in [0, \infty)\), \(u_K \in L^\infty (M)\), and thus \(u_K^* \in L^\infty (M)\). If we assume that \(u \in \mathcal{T}^{1, 1}_\mathrm{loc} (M)\) and that
\[
\int_{M} \varphi (d u) < \infty  
\]
then for every \(K \in [0, \infty)\), one has \(u_K \in W^{1, 1}_\mathrm{loc} (M)\) and 
\[
\int_{M} \varphi (d u_K) \le \int_{M} \varphi (d u) < \infty   . 
\]
By proposition~\ref{propositionPolyaSzego}, \(u_K^* \in W^{1, 1}_\mathrm{loc} (N)\), and 
\[
  \int_{N} \varphi (d u_K^*) \le \int_{M} \varphi (d u_K) \le \int_{M} \varphi (d u).
\]
By definition, \(u^* \in \mathcal{T}^{1, 1}_\mathrm{loc} (N)\), and by Lebesgue's monotone convergence theorem
\[
  \int_{N} \varphi (d u^*) = \lim_{K \to \infty}  \int_{N} \varphi (d u_K^*) \le \int_{M} \varphi (d u).
\]
\end{remark}

We show that the capacity inequality of proposition~\ref{propositionPolyaSzego} is necessary to have a P\'olya--Szeg\H o inequality.

\begin{proposition}[Necessary condition for the P\'olya--Szeg\H o inequality]
\label{propositionPolyaSzegoNecessary}
Let \(\varphi : T^*M \to \Bar{\R}_+\) and \(\psi : T^* N \to \Bar{\R}_+\) be Borel measurable functions such that \(\varphi = 0\) and \(\psi (0) = 0\).
If for every bounded Borel measurable function \(u : M \to \R\) such that  \(u \in W^{1, 1}_{\mathrm{loc}} (M)\) and 
\[
  \int_{M} \varphi (du) < \infty,
\]
one has \(u^* \in W^{1, 1}_{\mathrm{loc}} (N)\) and 
\[
\int_{N} \psi (du^*) \le \int_{M} \varphi (du),
\]
then for every Borel-measurable sets \(A \subset B \subset N\) and every \(\lambda > 0\),
\[
 \capa_{\psi, \lambda} (A^*, B^*) \le \capa_{\varphi, \lambda} (A, B).
\]
\end{proposition}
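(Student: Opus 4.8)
The plan is to unwind the definition of the capacity \(\capa_{\varphi,\lambda}(A,B)\), to reduce to \emph{bounded} competitors by truncation so that the Pólya--Szegő hypothesis becomes applicable, and finally to check that the rearranged competitor is admissible for the condenser \((A^*,B^*)\) by reading off the boundary conditions from the action on level sets.

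First I would dispose of the trivial case \(\capa_{\varphi,\lambda}(A,B)=\infty\), and otherwise fix a competitor \(u\in W^{1,1}_{\mathrm{loc}}(M)\) with \(u=\lambda\) on \(A\), \(u=0\) on \(M\setminus B\) and \(\int_M\varphi(du)<\infty\). The obstacle here is that \(u\) need not be bounded, so the hypothesis cannot be applied to it directly; I would therefore replace \(u\) by its truncation \(\tilde u=\max(\min(u,\lambda),0)\). Working in local charts and using the chain rule for weakly differentiable functions composed with the \(1\)--Lipschitz map \(s\mapsto\max(\min(s,\lambda),0)\), one obtains \(\tilde u\in W^{1,1}_{\mathrm{loc}}(M)\) with \(d\tilde u=du\) almost everywhere on \(\{x\in M: 0<u(x)<\lambda\}\) and \(d\tilde u=0\) almost everywhere elsewhere; since \(\varphi\ge 0\) and \(\varphi(0)=0\), this yields \(\int_M\varphi(d\tilde u)\le\int_M\varphi(du)<\infty\). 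Moreover \(\tilde u\) is bounded, has a Borel representative, and still satisfies \(\tilde u=\lambda\) on \(A\) and \(\tilde u=0\) on \(M\setminus B\), so the hypothesis applies to \(\tilde u\) and gives \(\tilde u^*\in W^{1,1}_{\mathrm{loc}}(N)\) together with \(\int_N\psi(d\tilde u^*)\le\int_M\varphi(d\tilde u)\).

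Next I would verify that \(\tilde u^*\) is a competitor for \(\capa_{\psi,\lambda}(A^*,B^*)\), writing \(\{\tilde u>t\}=\{x\in M:\tilde u(x)>t\}\) and \(\{\tilde u^*>t\}=\{y\in N:\tilde u^*(y)>t\}\). From \(0\le\tilde u\le\lambda\) one has \(\{\tilde u>t\}=M\) for \(t<0\) and \(\{\tilde u>t\}=\emptyset\) for \(t\ge\lambda\), so \(\{\tilde u^*>t\}=M^*=N\) for \(t<0\) and \(\{\tilde u^*>t\}=\emptyset^*=\emptyset\) for \(t\ge\lambda\), whence \(0\le\tilde u^*\le\lambda\) everywhere. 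For every \(t\in[0,\lambda)\) one has \(A\subseteq\{\tilde u>t\}\), so by the monotonicity on sets of \(\cdot^*\) (Proposition~\ref{propositionCharactSets}(b)) we get \(A^*\subseteq\{\tilde u>t\}^*=\{\tilde u^*>t\}\); since this holds for all \(t<\lambda\), it forces \(\tilde u^*\ge\lambda\) on \(A^*\), hence \(\tilde u^*=\lambda\) on \(A^*\). Similarly \(\{\tilde u>0\}\subseteq B\) gives \(\{\tilde u^*>0\}=\{\tilde u>0\}^*\subseteq B^*\), so \(\tilde u^*\le 0\) on \(N\setminus B^*\), hence \(\tilde u^*=0\) there.

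Finally, the definition of capacity gives \(\capa_{\psi,\lambda}(A^*,B^*)\le\int_N\psi(d\tilde u^*)\le\int_M\varphi(d\tilde u)\le\int_M\varphi(du)\), and taking the infimum over all admissible \(u\) yields the claimed inequality. The only delicate point---and the step I expect to require the most care---is keeping track of the strict versus non-strict inequalities at the endpoints \(t=0\) and \(t=\lambda\) when transporting the condenser conditions through the level-set identity; the truncation step and the uses of \(\varphi(0)=0\), \(\varphi\ge 0\), \(M^*=N\) and \(\emptyset^*=\emptyset\) are then routine.
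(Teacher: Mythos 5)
Your proposal is correct and follows essentially the same route as the paper's proof: truncate the competitor to \(\max(\min(u,\lambda),0)\) so the hypothesis applies, then use monotonicity of \(\cdot^*\) on the level sets \(\{u>t\}\) for \(t\in[0,\lambda)\) and at \(t=0\) to verify that the rearranged function is admissible for the condenser \((A^*,B^*)\), and take the infimum. You merely spell out details the paper leaves implicit (the chain-rule justification of the truncation and the verification \(0\le \tilde u^*\le\lambda\) via \(M^*=N\) and \(\emptyset^*=\emptyset\)), which is fine.
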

\begin{proof}
Let \(u \in W^{1, 1}_{\mathrm{loc}} (M)\) be Borel-measurable, such that \(u = 0\) on \(M \setminus B\) and \(u = \lambda\) on \(A\). If 
\[
  \int_{M} \varphi (du) < \infty, 
\]
we can assume by truncation that \(0 \le u \le \lambda\). By assumption, we deduce that \(u^* \in W^{1, 1}_{\mathrm{loc}} (N)\) and 
\[
 \int_{M} \varphi (du^*) \le \int_{M} \varphi (du).
\]
In order to conclude that \(\capa_{\varphi,\lambda} (A^*, B^*) \le \int_{M} \varphi (du)\), we need to show that \(u = 0\) on \(N \setminus B\) and that \(u = \lambda\) on \(A\).
We first observe that
\[
 B \supseteq \{x \in M \st u (x) > 0\}
\]
so that by monotonicity of \(\cdot^*\),
\[
 B^* \supseteq \{x \in M \st u (x) > 0\}^*=\{y \in N \st u^* (y) > 0\},
\]
that is, \(u = 0\) on \(N \setminus B^*\).
Next, for every \(t \in [0, 1)\),
\[
 A \subseteq \{x \in M \st u (x) > t\},
\]
hence 
\[
 A^* \subseteq \{x \in M \st u (x) > t\}^*=\{y \in N \st u^* (y) > t\}.
\]
Therefore,
\[
 A^* \subseteq \bigcap_{t \in [0, \lambda)} \{y \in N \st u^* (y) > t\} = \{y \in \R^n \st u^* (y) \ge \lambda\}.\qedhere
\]
\end{proof}

We deduce now theorem~\ref{theoremMain} as a particular case of propositions~\ref{propositionPolyaSzego} and \ref{propositionPolyaSzegoNecessary}.

\begin{proof}[Proof of theorem~\ref{theoremMain}]
In order to prove the P\'olya--Szeg\H o inequality, we first note that if \(\varphi : \R^n \times \R^n \to \mathcal{D}_+\R\) is defined for every \(x \in \R^n\), \(\xi \in \R^n\) by \(\varphi (x,\xi) = \abs{\xi}^2 \abs{\det}\), then for every Borel measurable sets \(A \subseteq B \subseteq \R^n\),
\[
  \capa_{\varphi, \lambda} (A, B) = \lambda^2 \capa (A, B).
\]
If \(u \in W^{1, 2} (\R^n)\), then, by definition, \(u \in L^2 (\R^n)\) and by the Cavalieri principle \eqref{eqCavalieri}, \(u^* \in L^2 (\R^n) \subset L^1_\mathrm{loc} (\R^n)\) and thus by proposition~\ref{propositionPolyaSzego}, \(u^* \in W^{1, 1}_\mathrm{loc} (\R^n)\) and  
\[
  \int_{\R^n} \abs{d u^*}^2 \le \int_{\R^n} \abs{du}^2\;.
\]

For the converse statement, we know that for every \(u \in L^2 (\R^n) \cap W^{1, 1}_{\mathrm{loc}} (\R^n)\), if \(du \in L^2 (\R^n)\), then \(u^* \in W^{1, 1}_{\mathrm{loc}} (\R^n)\) and 
\[
 \int_{\R^n} \abs{du^*}^2 \le \int_{\R^n} \abs{du}^2.
\]
In order to apply proposition~\ref{propositionPolyaSzegoNecessary}, we need to extend this to the case where \(u \in L^{\infty} (\R^n) \cap W^{1, 1}_{\mathrm{loc}} (\R^n)\) and \(du \in L^2 (\R^n)\).
Without loss of generality, we assume that 
\begin{equation}
\label{eqFinite}
  \inf\,\bigl\{ t \in \R \st \mathcal{L}^n(\{x \in \R^n\st u (x) > t\}) < \infty\bigr\} = 0
\end{equation}
and we define \(u_n = (u - \frac{1}{n})_+\).
By \eqref{eqFinite}, \(u_n \in L^2 (\R^n)\) and thus by the hypothesis,
\[
 \int_{\R^n} \abs{du_n{}^*}^2 \le
 \int_{\R^n} \abs{du_n}^2 \le
 \int_{\R^n} \abs{du}^2.
\]
Since \(u_n^* = (u^* - \frac{1}{n})_+\), we conclude by Lebesgue's monotone convergence theorem that \(u^* \in W^{1, 1}_\mathrm{loc} (\R^n)\) and that 
\[
  \int_{\R^n} \abs{du^*}^2 \le \int_{\R^n} \abs{d u}^2. 
\]
We conclude by the necessary condition for a P\'olya--Szeg\H o inequality of proposition~\ref{propositionPolyaSzegoNecessary}.
\end{proof}

\begin{bibdiv}
\begin{biblist}

\bib{Alberti2000}{article}{
   author={Alberti, Giovanni},
   title={Some remarks about a notion of rearrangement},
   journal={Ann. Scuola Norm. Sup. Pisa Cl. Sci. (4)},
   volume={29},
   date={2000},
   number={2},
   pages={457--472},
   issn={0391-173X},
}

\bib{AlvinoFeroneLionsTrombetti1997}{article}{
   author={Alvino, Angelo},
   author={Ferone, Vincenzo},
   author={Trombetti, Guido},
   author={Lions, Pierre-Louis},
   title={Convex symmetrization and applications},
   journal={Ann. Inst. H. Poincar\'e Anal. Non Lin\'eaire},
   volume={14},
   date={1997},
   number={2},
   pages={275--293},
   issn={0294-1449},
}

\bib{Baernstein1994}{article}{
   author={Baernstein, Albert, II},
   title={A unified approach to symmetrization},
   conference={
      title={Partial differential equations of elliptic type},
      address={Cortona},
      date={1992},
   },
   book={
      series={Sympos. Math., XXXV},
      publisher={Cambridge Univ. Press},
      place={Cambridge},
   },
   date={1994},
   pages={47--91},
}

\bib{BBGGPV1995}{article}{
   author={B\'enilan, Philippe},
   author={Boccardo, Lucio},
   author={Gallou\"et, Thierry},
   author={Gariepy, Ron},
   author={Pierre, Michel},
   author={Vazquez, Juan Luis},
   title={An \(L^1\) theory of existence and uniqueness of solutions of nonlinear elliptic equations},
   journal={Ann. Scuola Norm. Sup. Pisa Cl. Sci. 4},
   volume={22},
   number={2},
   year={1995},
   pages={241--273},
}

\bib{BerestyckiLachandRobert2004}{article}{
   author={Berestycki, H.},
   author={Lachand-Robert, T.},
   title={Some properties of monotone rearrangement with applications to
   elliptic equations in cylinders},
   journal={Math. Nachr.},
   volume={266},
   date={2004},
   pages={3--19},
   issn={0025-584X},
}

\bib{BergerGostiaux1988}{book}{
   author={Berger, Marcel},
   author={Gostiaux, Bernard},
   title={Differential geometry: manifolds, curves, and surfaces},
   series={Graduate Texts in Mathematics},
   volume={115},
   translator={Levy, Silvio },
   publisher={Springer},
   place={New York},
   date={1988},
}

\bib{Bogachev2007}{book}{
   author={Bogachev, V. I.},
   title={Measure theory},
   publisher={Springer},
   place={Berlin},
   date={2007},
   pages={Vol. I: xviii+500 pp., Vol. II: xiv+575},
   isbn={978-3-540-34513-8},
   isbn={3-540-34513-2},
}

\bib{Brock1993}{article}{
   author={Brock, F.},
   title={Axially symmetric flow with finite cavities. I},
   journal={Z. Anal. Anwendungen},
   volume={12},
   date={1993},
   number={1},
   pages={97--112},
   issn={0232-2064},
}

\bib{BrockSolynin2000}{article}{
   author={Brock, Friedemann},
   author={Solynin, Alexander Yu.},
   title={An approach to symmetrization via polarization},
   journal={Trans. Amer. Math. Soc.},
   volume={352},
   date={2000},
   number={4},
   pages={1759--1796},
   issn={0002-9947},
}

\bib{Carbou1995}{article}{
   author={Carbou, Gilles},
   title={Unicit\'e et minimalit\'e des solutions d'une \'equation de
   Ginzburg-Landau},
   journal={Ann. Inst. H. Poincar\'e Anal. Non Lin\'eaire},
   volume={12},
   date={1995},
   number={3},
   pages={305--318},
   issn={0294-1449},
}

\bib{Cianchi2007}{article}{
   author={Cianchi, Andrea},
   title={Symmetrization in anisotropic elliptic problems},
   journal={Comm. Partial Differential Equations},
   volume={32},
   date={2007},
   number={4-6},
   pages={693--717},
   issn={0360-5302},
}

\bib{ConventVanSchaftingen}{article}{
  author={Convent, Alexandra},
  author = {Van Schaftingen, Jean},
  title={Intrinsic colocal weak derivatives and Sobolev spaces between manifolds}, 
  note = {submitted for publication},
  eprint = {arXiv:1312.5858},
}

\bib{CroweZweibelRosenbloom1986}{article}{
   author={Crowe, J. A.},
   author={Zweibel, J. A.},
   author={Rosenbloom, P. C.},
   title={Rearrangements of functions},
   journal={J. Funct. Anal.},
   volume={66},
   date={1986},
   number={3},
   pages={432--438},
   issn={0022-1236},
}

\bib{Dubinin1985}{article}{
   author={Dubinin, V. N.},
   title={Transformation of functions and the Dirichlet principle},
   language={Russian},
   journal={Mat. Zametki},
   volume={38},
   date={1985},
   number={1},
   pages={49--55, 169},
   issn={0025-567X},
}

\bib{Dubinin1987}{article}{
   author={Dubinin, V. N.},
   title={Transformation of condensers in space},
   language={Russian},
   journal={Dokl. Akad. Nauk SSSR},
   volume={296},
   date={1987},
   number={1},
   pages={18--20},
   issn={0002-3264},
   translation={
      journal={Soviet Math. Dokl.},
      volume={36},
      date={1988},
      number={2},
      pages={217--219},
      issn={0197-6788},
   },
}

\bib{Dubinin1991}{article}{
   author={Dubinin, V. N.},
   title={Transformations of condensers in an $n$-dimensional space},
   language={Russian},
   journal={Zap. Nauchn. Sem. Leningrad. Otdel. Mat. Inst. Steklov.
   (LOMI)},
   volume={196},
   date={1991},
   number={Modul. Funktsii Kvadrat. Formy. 2},
   pages={41--60, 173},
   issn={0373-2703},
   translation={
      journal={J. Math. Sci.},
      volume={70},
      date={1994},
      number={6},
      pages={2085--2096},
      issn={1072-3374},
   },
}

\bib{DunfordSchwartz}{book}{
   author={Dunford, Nelson},
   author={Schwartz, Jacob T.},
   title={Linear Operators I. General Theory},
   series={Pure and
   Applied Mathematics},
   volume = {7},
   publisher={Interscience Publishers, Inc., New York},
   date={1958},
   pages={xiv+858},
}

\bib{EkelandTemam1976}{book}{
   author={Ekeland, Ivar},
   author={Temam, Roger},
   title={Convex analysis and variational problems},
   note={Translated from the French;
   Studies in Mathematics and its Applications, Vol. 1},
   publisher={North-Holland Publishing Co.},
   place={Amsterdam},
   date={1976},
   pages={ix+402},
}

\bib{GarsiaRodemich}{article}{
   author={Garsia, A. M.},
   author={Rodemich, E.},
   title={Monotonicity of certain functionals under rearrangement},
   note={Colloque International sur les Processus Gaussiens et les
   Distributions Al\'eatoires (Colloque Internat. du CNRS, No. 222,
   Strasbourg, 1973)},
   journal={Ann. Inst. Fourier (Grenoble)},
   volume={24},
   date={1974},
   number={2},
   pages={vi, 67--116},
   issn={0373-0956},
}

\bib{Gehring1961}{article}{
   author={Gehring, F. W.},
   title={Symmetrization of rings in space},
   journal={Trans. Amer. Math. Soc.},
   volume={101},
   date={1961},
   pages={499--519},
   issn={0002-9947},
}

\bib{Kawohl1985}{book}{
   author={Kawohl, Bernhard},
   title={Rearrangements and convexity of level sets in PDE},
   series={Lecture Notes in Mathematics},
   volume={1150},
   publisher={Springer},
   place={Berlin},
   date={1985},
   pages={iv+136},
   isbn={3-540-15693-3},
}

\bib{Klimov1999}{article}{
   author={Klimov, V. S.},
   title={On the symmetrization of anisotropic integral functionals},
   language={Russian},
   journal={Izv. Vyssh. Uchebn. Zaved. Mat.},
   date={1999},
   number={8},
   pages={26--32},
   issn={0021-3446},
   translation={
      journal={Russian Math. (Iz. VUZ)},
      volume={43},
      date={1999},
      number={8},
      pages={23--29},
      issn={1066-369X},
   },
}

\bib{Landes2007}{article}{
   author={Landes, Ruediger},
   title={Some remarks on rearrangements and functionals with non-constant
   density},
   journal={Math. Nachr.},
   volume={280},
   date={2007},
   number={5-6},
   pages={560--570},
   issn={0025-584X},
}

\bib{Lee2013}{book}{
   author={Lee, John M.},
   title={Introduction to smooth manifolds},
   series={Graduate Texts in Mathematics},
   volume={218},
   edition={2},
   publisher={Springer},
   place={New York},
   date={2013},
   pages={xvi+708},
   isbn={978-1-4419-9981-8},
}

\bib{LiebLoss2001}{book}{
   author={Lieb, Elliott H.},
   author={Loss, Michael},
   title={Analysis},
   series={Graduate Studies in Mathematics},
   volume={14},
   edition={2},
   publisher={American Mathematical Society},
   place={Providence, RI},
   date={2001},
}

\bib{Mazya1972}{article}{
  author={Maz{\cprime}ya, Vladimir},
  title={On certain integral inequalities for functions of many variables}, 
  journal={Problems of Mathematical Analysis, Leningrad Univ.},
  volume={3},
  date={1972},
  pages={33--68},
  language = {Russian}, 
  translation={
   journal={J. Soviet Math.},
   volume={1},
   date={1973},
   pages={205--234},
  },
}

\bib{Mazya2005}{article}{
   author={Maz{\cprime}ya, Vladimir},
   title={Conductor and capacitary inequalities for functions on topological
   spaces and their applications to Sobolev-type imbeddings},
   journal={J. Funct. Anal.},
   volume={224},
   date={2005},
   number={2},
   pages={408--430},
   issn={0022-1236},
}

\bib{Mazya2011}{book}{
   author={Maz{\cprime}ya, Vladimir},
   title={Sobolev spaces with applications to elliptic partial differential
   equations},
   series={Grundlehren der Mathematischen Wissenschaften},
   volume={342},
   edition={2},
   publisher={Springer},
   place={Heidelberg},
   date={2011},
}

\bib{Polya1950}{article}{
   author={P{\'o}lya, Georges},
   title={Sur la sym\'etrisation circulaire},
   journal={C. R. Acad. Sci. Paris},
   volume={230},
   date={1950},
   pages={25--27},
}

\bib{PolyaSzego1945}{article}{
   author={P{\'o}lya, G.},
   author={Szeg{\H o}, G.},
   title={Inequalities for the capacity of a condenser},
   journal={Amer. J. Math.},
   volume={67},
   date={1945},
   pages={1--32},
   issn={0002-9327},
}

\bib{PolyaSzego1951}{book}{
   author={P{\'o}lya, G.},
   author={Szeg{\H o}, G.},
   title={Isoperimetric Inequalities in Mathematical Physics},
   series={Annals of Mathematics Studies, no. 27},
   publisher={Princeton University Press},
   place={Princeton, N. J.},
   date={1951},
   pages={xvi+279},
}

\bib{Sarvas1972}{article}{
   author={Sarvas, Jukka},
   title={Symmetrization of condensers in $n$-space},
   journal={Ann. Acad. Sci. Fenn. Ser. A I},
   number={522},
   date={1972},
   pages={44},
}

\bib{Talenti1976}{article}{
   author={Talenti, Giorgio},
   title={Best constant in Sobolev inequality},
   journal={Ann. Mat. Pura Appl. (4)},
   volume={110},
   date={1976},
   pages={353--372},
   issn={0003-4622},
}

\bib{VanSchaftingen2002}{thesis}{
   author={Van Schaftingen, Jean},
   title={R\'earrangements et probl\`emes elliptiques non lin\'eaires},
   type={Master thesis},
   institution={Universit\'e catholique de Louvain, Facult\'e des Science appliqu\'ees},
   date={2002},
}

\bib{VanSchaftingen2006PAMS}{article}{
   author={Van Schaftingen, Jean},
   title={Universal approximation of symmetrizations by polarizations},
   journal={Proc. Amer. Math. Soc.},
   volume={134},
   date={2006},
   number={1},
   pages={177--186},
   issn={0002-9939},
}

\bib{VanSchaftingen2006AIHP}{article}{
   author={Van Schaftingen, Jean},
   title={Anisotropic symmetrization},
   journal={Ann. Inst. H. Poincar\'e Anal. Non Lin\'eaire},
   volume={23},
   date={2006},
   number={4},
   pages={539--565},
   issn={0294-1449},
}

\bib{VanSchaftingenWillem2004}{incollection}{
   author={Van Schaftingen, J.},
   author={Willem, M.},
   title={Set transformations, symmetrizations and isoperimetric
   inequalities},
   book={
      title={Nonlinear analysis and applications to physical sciences},
      editor={Benci, Vieri},
      editor={Masiello, Antonio},
      publisher={Springer Italia},
      address={Milano},
      date={2004},
   }
   pages={135--152},
} 

\bib{Willem2003}{book}{
   author = {Willem, Michel},
   title = {Analyse fonctionnelle \'el\'ementaire},
   address = {Paris},
   date = {2003},
   publisher = {Cassini},
}

 \bib{Wojtaszczyk1991}{book}{
   author={Wojtaszczyk, P.},
   title={Banach spaces for analysts},
   series={Cambridge Studies in Advanced Mathematics},
   volume={25},
   publisher={Cambridge University Press},
   place={Cambridge},
   date={1991},
   pages={xiv+382},
   isbn={0-521-35618-0},
}

\bib{Wolontis1952}{article}{
   author={Wolontis, Vidar},
   title={Properties of conformal invariants},
   journal={Amer. J. Math.},
   volume={74},
   date={1952},
   pages={587--606},
   issn={0002-9327},
}

\end{biblist}
\end{bibdiv}

\end{document}